\newtheorem{theorem}{Theorem}[section]
\newtheorem{proposition}{Proposition}[section]
\newtheorem{lemma}{Lemma}[section]
\newtheorem{remark}{Remark}[section]
\numberwithin{equation}{section}
\title[Upper bound on the multiplicity of eigenvalues]{Upper bound on the multiplicity of eigenvalues of the Sch\"odinger-Dirichlet operator in dimension two}
\author{Mourad Choulli}
\address{Universit\'{e} de Lorraine, 34 cours L\'{e}opold, 54052 Nancy cedex, France}
\email{mourad.choulli@univ-lorraine.fr}
\date{}
\begin{document}

\begin{abstract}
We establish an upper bound on the multiplicity of eigenvalues of the Sch\"odinger-Dirichlet operator in dimension two. We give a proof based on a generalized Morse Lemma due to Cheng \cite{Ch}.
\end{abstract}

\subjclass[2010]{35B05, 35J15, 35P15}

\keywords{Schr\"odinger-Dirichlet operator, multiplicity of the eigenvalues, nodal domain.}

\maketitle

\section{Introduction}

All the functions we will consider are assumed to have real values.

Let  $n\ge 2$ be an integer, $\Omega$ a bounded domain of $\mathbb{R}^n$ and $V\in L^\infty (\Omega)$. We denote the sequence of eigenvalues of the operator 
\[
A:=-\Delta+V,\quad D(A)=\{f\in H_0^1(\Omega);\; \Delta u\in L^2(\Omega)\}
\] 
by $(\lambda_k)_{k\ge 1}$. These eigenvalues are counted according to their multiplicity and arranged as a non-decreasing sequence:
\[
-\infty <\lambda_1<\lambda_2\le \lambda_3\le \ldots \le \lambda_k\le \ldots \quad \mathrm{and}\quad \lim_{k\rightarrow \infty}\lambda_k=\infty.
\]

Let $k\ge 2$. If  there exists an integer $p\ge 0$ such that
\[
\lambda_{k-1}<\lambda_k=\ldots =\lambda_{k+p}<\lambda_{k+p+1},
\]
then $m_k:=p+1$ is  the multiplicity of $\lambda_k$. It is the dimension of the eigenspace $E_k$ associated with the eigenvalue $\lambda_k$.

\begin{theorem}\label{mthm1}
Let $n=2$ and assume that $\Omega$ is of class $C^\infty$ and $V\in C^\infty (\overline{\Omega})$. For all $k\ge 2$, we have $m_k\le 2k-1$. 
\end{theorem}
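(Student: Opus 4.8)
The plan is to combine the local normal form for the nodal set of an eigenfunction with the classical Courant nodal domain theorem, the bridge between the two being a dimension count on the eigenspace $E_k$.

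First I would record the local structure of eigenfunctions. Fix $u\in E_k$, $u\neq 0$, and an interior point $p\in\Omega$. Writing the equation as $\Delta u=(V-\lambda_k)u$, if $u$ vanishes to order $\ell$ at $p$ then $(V-\lambda_k)u$ vanishes to order $\ell$ as well, so the lowest-degree term $P_\ell$ in the Taylor expansion of $u$ at $p$ must be a homogeneous \emph{harmonic} polynomial of degree $\ell$ (its Laplacian, of degree $\ell-2$, has to cancel). In two variables the space $\mathcal H_\ell$ of such polynomials has dimension $2$ for $\ell\ge 1$ and $1$ for $\ell=0$, and $P_\ell$ is, up to rotation, a multiple of $\operatorname{Re}(z^\ell)$; in particular its zero set consists of $2\ell$ equiangular rays. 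By Cheng's generalized Morse lemma \cite{Ch} the nodal set of $u$ near $p$ is diffeomorphic to this model, i.e. $2\ell$ arcs issuing from $p$ and dividing a small disk into $2\ell$ sectors on which $u$ alternates sign. The same analysis, together with unique continuation, shows every zero has finite order.

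Next I would produce a single eigenfunction vanishing to high order at $p$. Set $m:=m_k=\dim E_k$ and filter $E_k$ by the vanishing order at $p$: let $F_j=\{u\in E_k:\ u \text{ vanishes to order}\ge j \text{ at } p\}$. The leading-term map $u\mapsto P_j(u)$ embeds $F_j/F_{j+1}$ into $\mathcal H_j$, so $\dim(F_j/F_{j+1})\le\dim\mathcal H_j\le 2$. Summing the quotients gives $\dim F_\ell\ge m-(2\ell-1)$, hence $F_\ell\neq\{0\}$ as soon as $\ell\le m/2$. Taking $\ell=\lfloor m/2\rfloor$ then yields a nonzero $u\in E_k$ vanishing to order at least $\lfloor m/2\rfloor$ at $p$.

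Finally I would turn the local picture at $p$ into a global count. The decisive combinatorial claim is that an interior zero of order $\ell$ forces $u$ to have at least $\ell+1$ nodal domains. Indeed, label the $2\ell$ sectors at $p$ cyclically with alternating signs; two same-sign sectors lie in the same nodal domain only if they can be joined inside $\{u\neq0\}$, and a Jordan-curve argument shows that the resulting identification is a \emph{non-crossing} partition of the $2\ell$ alternating sectors into monochromatic blocks. Such a partition has at least $\ell+1$ blocks, so $u$ has at least $\ell+1$ nodal domains. Courant's theorem bounds this number by $k$, whence $\ell+1\le k$, i.e. $\lfloor m/2\rfloor\le k-1$; treating both parities of $m$ then gives $m=m_k\le 2k-1$. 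The step I expect to be the main obstacle is precisely this last topological count: making the non-crossing/Euler-characteristic bookkeeping rigorous, controlling how the nodal arcs meet the Dirichlet boundary $\partial\Omega$ (itself contained in the nodal set), and ensuring that the lower bound $\ell+1$ survives when $\Omega$ is not simply connected, where extra handles a priori allow same-sign sectors to reconnect around a hole. Cheng's generalized Morse lemma \cite{Ch} is what makes this tractable, since it reduces the nodal set near each singular point to a finite, explicitly described graph to which Euler's formula can be applied.
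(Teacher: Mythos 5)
Your proposal is correct and follows essentially the same route as the paper: where you filter $E_k$ by vanishing order at $p$ and bound $\dim(F_j/F_{j+1})\le\dim\mathcal{H}_j\le 2$ via harmonicity of the leading term, the paper equivalently imposes $2k-1$ explicit jet conditions ($\partial_y^m f(0)=0$ for $m\le k-1$ and $\partial_x\partial_y^m f(0)=0$ for $m\le k-2$) on a combination of $2k$ eigenfunctions and bootstraps with the equation $\Delta f=(V-\lambda_k)f$ to force vanishing of order $\ge k$, after which both arguments proceed identically through Cheng's generalized Morse lemma, the $2\ell$-ray normal form of Proposition \ref{prop1}, the count of at least $\ell+1$ nodal domains, and Courant's theorem via \cite{AHH}. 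Concerning the step you flag as the main obstacle (the paper asserts the $\ell+1$ count in a single sentence): your worry about multiply connected $\Omega$ is unfounded, because the non-crossing property follows from the Jordan curve theorem applied in the \emph{ambient plane} $\mathbb{R}^2$ rather than in $\Omega$ --- the separating Jordan curve joining two same-sign sectors is closed up through the interior zero $p$ itself, so it lies in $\{f>0\}\cup\{p\}$ and a path inside any other nodal domain can cross it nowhere --- independently of the topology of $\Omega$.
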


By the usual elliptic regularity, when the assumptions of Theorem \ref{mthm1} are verified, we have $E_k\subset C^\infty(\overline{\Omega})$ for all $k\ge 1$.

Theorem \ref{mthm1} can be improved by assuming an additional symmetry condition, as shown by the following result. In what follows, $\varrho :(x,y)\in \mathbb{R}^2\mapsto (y,x)\in \mathbb{R}^2$.

\begin{theorem}\label{mthm3}
In addition to the assumptions of Theorem \ref{mthm1}, suppose that $\varrho(\Omega)=\Omega$ and $V=V\circ \varrho$. For all $k\ge 2$, we have $m_k\le 2k-2$.
\end{theorem}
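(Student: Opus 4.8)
The plan is to use the reflection $\varrho$ to split the eigenspace into two parity classes and to rerun, class by class, the nodal-set argument behind Theorem~\ref{mthm1}, arranging the count so that the extra nodal structure forced by the symmetry removes one unit from the bound. Since $\varrho$ is an isometry with $\varrho(\Omega)=\Omega$ and $V\circ\varrho=V$, the operator $A$ commutes with the involution $Ru:=u\circ\varrho$, which preserves $D(A)$ and every eigenspace. Thus $R$ restricts to an involution of $E_k$ and
\[
E_k=E_k^+\oplus E_k^-,\qquad E_k^{\pm}=\{u\in E_k:\ u\circ\varrho=\pm u\},
\]
so that $m_k=\dim E_k^++\dim E_k^-$. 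Every $u\in E_k^-$ vanishes on the fixed diagonal $\gamma:=\Omega\cap\{x=y\}$, while every $u\in E_k^+$ has vanishing normal derivative along $\gamma$.

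First I would record the spectral decomposition induced by this splitting. Writing $\Omega^+:=\Omega\cap\{x>y\}$, even and odd reflection across $\gamma$ identify the spectrum of $A$ on $\Omega$ with the disjoint union, counted with multiplicity, of the spectrum of $A$ on $\Omega^+$ under the Dirichlet condition (matching $E_k^-$) and under the mixed condition, Dirichlet on $\partial\Omega\cap\overline{\Omega^+}$ and Neumann on $\gamma$ (matching $E_k^+$). If $a$ and $b$ denote the numbers of eigenvalues strictly below $\lambda_k$ of the Dirichlet and of the mixed problem respectively, this gives $a+b=k-1$; moreover $\dim E_k^-$ is the multiplicity of $\lambda_k$ at position $a+1$ in the Dirichlet spectrum and $\dim E_k^+$ its multiplicity at position $b+1$ in the mixed spectrum.

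Next I would apply the generalized Morse Lemma of Cheng \cite{Ch} together with the nodal count used for Theorem~\ref{mthm1}, but now to each half-problem on $\Omega^+$. A direct transcription of that argument (granting the corner analysis discussed below) would give multiplicity at most $2(a+1)-1$ and $2(b+1)-1$ for the two classes, hence only $m_k\le 2k$. The improvement to $2k-2$ is to come from the diagonal: for the odd class the whole segment $\gamma$ is an a priori nodal arc joining the two points of $\partial\Omega\cap\gamma$, and for the even class the Neumann condition on $\gamma$ permits an even reflection; in either case one disposes of one extra unit of nodal structure, which feeds through the counting inequality. The point to verify is that the odd and even extremal configurations cannot both occur, so that the saving is genuinely two units overall rather than one per class, making the sum $2(a+b)=2k-2$.

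The hard part will be the two corners where $\gamma$ meets $\partial\Omega$. There $\Omega^+$ is only piecewise smooth and, for the even class, the boundary condition changes type from Dirichlet to Neumann, so both the $C^\infty$ regularity up to the boundary used implicitly in Theorem~\ref{mthm1} and the local equiangular description of the nodal set provided by Cheng's lemma must be re-established near these corners, and one must check that they create no spurious nodal branches that would spoil the count. Controlling precisely the corner contribution to the Euler-characteristic count, so that exactly one unit is gained from the diagonal, is the crux; the even (mixed) class, where the type of the boundary condition jumps, is the more delicate of the two.
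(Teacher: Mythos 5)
Your proposal takes a genuinely different route from the paper (half-domain spectral decomposition rather than a symmetry-adapted linear-algebra argument on $\Omega$ itself), but as written it contains a real gap: every step that produces the improvement from $2k$ to $2k-2$ is deferred rather than proved. You correctly note that a direct transcription of the proof of Theorem \ref{mthm1} to the two half-problems on $\Omega^+$ would only give $2(a+1)-1$ and $2(b+1)-1$, i.e.\ $m_k\le 2k$; the entire gain is then attributed to ``one extra unit of nodal structure'' from the diagonal, but this is never turned into an inequality. Moreover, even the transcription itself is not available off the shelf: $\Omega^+$ has corners at the two points of $\overline{\gamma}\cap\partial\Omega$, and for the even class the boundary condition changes type there, so neither the $C^\infty$ regularity up to the boundary used in Theorem \ref{mthm1}, nor the Courant-type theorem of \cite{AHH} (invoked in the paper for the Dirichlet problem on a smooth domain), nor the local structure of nodal sets near such corners is established. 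You name all of these issues yourself (``the hard part will be the two corners\dots is the crux''), which is honest, but it means the proposal is a program, not a proof: the bounds $2(a+1)-2$ and $2(b+1)-2$ for the two parity classes, whose sum would give $2k-2$, are exactly the statements left unproved.

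The paper's proof sidesteps all of this by never leaving $\Omega$. Lemma \ref{lem1} produces a basis $(f_1,\dots,f_m)$ of $E_k$ with each $f_j$ and $f_j\circ\varrho$ linearly dependent (your decomposition $E_k=E_k^+\oplus E_k^-$ is the same fact in different clothing). Then, taking the base point $0$ on the diagonal (which meets $\Omega$ by connectedness and $\varrho$-invariance), one imposes only the $2k-2$ linear conditions $f(0)=\partial_yf(0)=0$ and $\partial_x^j\partial_y^pf(0)=0$ for $j=0,1$, $2\le p\le k-1$: the reflection symmetry then yields the mirrored conditions $\partial_xf(0)=0$, $\partial_x^p\partial_y^jf(0)=0$ for free, recovering the full order-$(k-1)$ vanishing \eqref{12}. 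Hence $m\ge 2k-1$ already suffices to build a nonzero $f\in E_k$ vanishing to order $k-1$ at a point, and the argument of Theorem \ref{mthm1} (Cheng's generalized Morse lemma \cite{Ch} plus the Courant nodal theorem) gives the contradiction --- with no half-domain spectral theory and no corner analysis. In short: the symmetry is used to save one linear condition at an interior fixed point, not to decompose the spectrum. If you wish to pursue your route, you would first need the regularity and nodal-structure theory for mixed Dirichlet--Neumann problems at corners where the condition changes type, which is substantially harder than the problem you are trying to solve.
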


In the case $V=0$, with the assumptions of Theorem \ref{mthm1}, we have $m_k\le 2k-3$ for all $k\ge 3$ when $\Omega$ is simply connected. This result was initially proven by Hoffmann-Ostenhof, Michor and Nadirashvili \cite{HMN} without assuming that $\Omega$ is simply connected. As pointed out by Berdnikov \cite{Be}, the proof in \cite{HMN} contains a gap. Berdnikov also shows in \cite{Be} that the bound $m_k\le 2k-3$ holds for all $k\ge 3$ if $\Omega$ is assumed to be simply connected.

In this short article, we give a simple proof of Theorem \ref{mthm1}. Although some of the results we use are more or less known, we provide their detailed proof for the sake of completeness. The proof of the estimate $m_k\le 2k-3$ in the case where $\Omega$ is simply connected and $V=0$ requires a more thorough analysis of the nodal lines of the eigenfunctions. See Berdnikov \cite{Be} for more details.

\section{Preliminaries}

\subsection{A generalized Morse lemma} 

Let $\mathcal{N}$ be the set of open neighborhoods of $0\in \mathbb{R}^n$, $n\ge 1$, and set $\mathbb{N}_0^n=\mathbb{N}\cup\{0\}$. In what follows,  for $\alpha=(\alpha_1,\ldots ,\alpha_n)\in \mathbb{N}_0^n$, we will use the usual notations
\begin{equation}\label{pd}
\partial ^\alpha=\partial_{x_1}^\alpha\ldots \partial_{x_n}^{\alpha_n},\quad |\alpha|=\alpha_1+\ldots +\alpha_n.
\end{equation}

\begin{theorem}\label{thm1}
Let $f$ and $p$ be two $C^\infty$ functions defined in $\omega\in \mathcal{N}$.  Let $\ell >2$ be an integer, $\epsilon \in (0,1]$ and assume that the following conditions hold:
\\
$\mathrm{(i)}$ $\partial ^\alpha p(0)=0$ for all $|\alpha|\le \ell -1$ and $|\nabla p|\ge \mathfrak{c}|x|^{\ell-1}$, where $\mathfrak{c}>0$ is a constant,
\\
$\mathrm{(ii)}$ $\partial^\alpha f(x)=\partial^\alpha p(x)+O\left(|x|^{\ell-|\alpha|+\epsilon}\right)$, $0\le |\alpha|\le 2$.
%
\\
Then there exist $\omega_0,\omega_1\in \mathcal{N}$ and $\phi:\omega_0\mapsto \omega_1$ a $C^1$-diffeomorphism such that $\phi(0)=0$ and $f(x)=p(\phi(x))$.
\end{theorem}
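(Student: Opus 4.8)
The plan is to use Moser's deformation method (the path method). Introduce the homotopy $f_t = p + t(f-p) = (1-t)p + tf$ for $t\in[0,1]$, so that $f_0 = p$ and $f_1 = f$. I will construct a one-parameter family of $C^1$-diffeomorphisms $\psi_t$, defined on a fixed neighborhood of $0$ and fixing $0$, with $\psi_0 = \mathrm{id}$ and
\[
f_t(\psi_t(x)) = p(x), \qquad t\in[0,1].
\]
Evaluating at $t=1$ yields $f(\psi_1(x)) = p(x)$, and then $\phi := \psi_1^{-1}$ is a $C^1$-diffeomorphism with $\phi(0)=0$ and $f(x) = p(\phi(x))$, as required.

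To produce $\psi_t$, I realize it as the flow of a time-dependent vector field $X_t$, i.e.\ $\dot\psi_t = X_t\circ\psi_t$ with $\psi_0 = \mathrm{id}$. Differentiating the relation $f_t(\psi_t(x)) = p(x)$ in $t$ and writing $y=\psi_t(x)$ gives the infinitesimal equation $(f-p)(y) + \nabla f_t(y)\cdot X_t(y) = 0$. I will solve it explicitly by setting
\[
X_t := (p-f)\,\frac{\nabla f_t}{|\nabla f_t|^2}\ \ (x\ne 0), \qquad X_t(0) := 0,
\]
which satisfies $\nabla f_t\cdot X_t = p-f$. With this choice $\frac{d}{dt}f_t(\psi_t(x)) = (f-p)(\psi_t) + \nabla f_t(\psi_t)\cdot X_t(\psi_t) = 0$, so $f_t(\psi_t(x))$ is constant in $t$ and equal to its value $p(x)$ at $t=0$.

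The heart of the argument, and the main obstacle, is to show that $X_t$, despite the singular factor $|\nabla f_t|^{-2}$, extends to a $C^1$ vector field near $0$. First, conditions $\mathrm{(i)}$ and $\mathrm{(iii)}$ give the uniform lower bound $|\nabla f_t| \ge |\nabla p| - t|\nabla(f-p)| \ge \mathfrak{c}|x|^{\ell-1} - C|x|^{\ell-1+\epsilon} \ge \frac{\mathfrak{c}}{2}|x|^{\ell-1}$ on a small ball, uniformly in $t\in[0,1]$; in particular $\nabla f_t$ vanishes only at $0$. Combining this with $\mathrm{(ii)}$ gives $|X_t| \le C|x|^{1+\epsilon}$, so $X_t$ is continuous at $0$ with $X_t(0)=0$ and is differentiable there with $DX_t(0)=0$. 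For $x\ne 0$, differentiating and using the Taylor estimates $f-p=O(|x|^{\ell+\epsilon})$, $\nabla(f-p)=O(|x|^{\ell-1+\epsilon})$ and $D^2 f_t = O(|x|^{\ell-2})$ together with the lower bound yields $DX_t = O(|x|^{\epsilon})$, which tends to $0=DX_t(0)$ as $x\to 0$. Hence $X_t\in C^1$ near the origin and depends continuously on $t$. This regularity step is exactly where the three hypotheses balance each other: it is the gain of the exponent $\epsilon$ that renders $X_t$, and therefore the flow, of class $C^1$ up to $0$.

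Finally, since $X_t$ is $C^1$ in $x$ and continuous in $t$, and $|X_t(x)|\le C|x|$ near $0$, Gronwall's inequality confines the trajectories, $|\psi_t(x)|\le |x|e^{C}$ for $t\in[0,1]$. Choosing $\omega_0$ a sufficiently small ball, the flow $\psi_t$ is therefore defined for all $t\in[0,1]$ and maps $\omega_0$ into the region where the estimates hold. Being the flow of a $C^1$ vector field, each $\psi_t$ is a $C^1$-diffeomorphism onto its image with $C^1$ inverse, and $\psi_t(0)=0$ because $0$ is an equilibrium of $X_t$. Setting $\phi=\psi_1^{-1}$, $\omega_1=\psi_1(\omega_0)$ and $\omega_0$ as above then completes the proof.
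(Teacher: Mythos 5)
Your proposal is correct and is essentially the paper's own argument: the Kuo--Cheng deformation along the segment $(1-t)p+tf$ with the field $(p-f)\nabla f_t/|\nabla f_t|^2$, the only difference being that the paper suspends the homotopy as an autonomous flow in $(x,\tau)$-space (hence its hitting time $t(x)$, the Cauchy--Schwarz bound on $Y_{n+1}$, and the verification $D\phi(0)=I$, all of which your time-dependent formulation bypasses). One step you should make explicit: the bound $D^2 f_t=O(|x|^{\ell-2})$ is not among the hypotheses --- it holds because (i)--(ii) and smoothness force the degree-$\ell$ Taylor polynomials of $f$ and $p$ to coincide, giving $D^2(f-p)=O(|x|^{\ell-1})$ and $D^2p=O(|x|^{\ell-2})$; this is precisely what your estimate $DX_t=O(|x|^{\epsilon})$ needs (and is also tacitly used in the paper's brief claim that its field $X$ is $C^1$).
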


\begin{proof}
Define
\[
F(x,\tau)=(1-\tau)f(x)+\tau p(x),\quad (x,\tau)\in \omega \times \mathbb{R}.
\]
Then
\[
\nabla F:=(\nabla_x F,\partial_\tau F)=((1-\tau)\nabla f+\tau\nabla p,-f+p).
\]

Henceforth, $\mathbf{c}>0$ will denote a generic constant independent of $\tau$. 

We have $\nabla F(0,\cdot)=0$ and, since 
\begin{align*}
|\nabla F|&\ge |\nabla_xF|-|\partial_\tau F|
\\
&\ge |\nabla p| -|1-\tau| |\nabla f-\nabla p|-|p-f|
\\
&\ge \mathfrak{c}|x|^{\ell-1}-\left|(1-\tau)O\left(|x|^{\ell-1+\epsilon}\right)\right|-\left|O\left(|x|^{\ell+\epsilon}\right)\right|,
\end{align*}
reducing $\omega$ if necessary, we obtain
\begin{equation}\label{0}
|\nabla F|\ge \mathbf{c}|x|^{\ell-1}\quad \mathrm{in}\; \omega \times [0,2].
\end{equation}

Define in $\omega \times [0,2]$ the vector field
\[
X(x,\tau)=(p(x)-f(x))\frac{\nabla F(x,\tau)}{|\nabla F(x,\tau)|^2},\; x\ne 0,\quad X(0,\tau)=0.
\]
\eqref{0} and $\mathrm{(ii)}$ with $|\alpha|=1$ yield $|X(x,\tau)|\le \mathbf{c}|x|^{1+\epsilon}$, $(x,\tau)\in \omega \times [0,2]$. We claim that $X$ is of class $C^1$ in $\omega \times [0,2]$. The proof, which is quite technical, is provided in Appendix \ref{appA}. 

Let $\xi=(0,\ldots, 0,1)\in \mathbb{R}^{n+1}$. For $x\in \overline{B}(0,2\delta)\subset  \omega$ and $\tau\in [0,2]$, using Cauchy-Schwarz inequality, we obtain
\begin{align*}
\xi \cdot (\xi -X(x,\tau))&=1-\xi \cdot X(x,\tau)
\\
&=1-\frac{(p(x)-f(x))^2}{|\nabla F(x,\tau)|^2}
\\
&\ge 1-\mathbf{c}|x|^{2(1+\epsilon)}.
\end{align*}
Whence
\begin{equation}\label{1}
\xi \cdot (\xi -X(x,\tau))\ge 1-\mathbf{c}\delta^{2(1+\epsilon)}\quad (x,\tau)\in \overline{B}(0,2\delta) \times [0,2].
\end{equation}
We choose $\delta$ sufficiently small in such a way that
\begin{equation}\label{2}
|X(x,\tau)|\le \delta /4,\quad \quad (x,\tau)\in \overline{B}(0,2\delta) \times [0,2].
\end{equation}
Let 
\[
Y(x,\tau)=(Y_1(x,\tau),\ldots ,Y_{n+1}(x,\tau)):=\xi-X(x,\tau). 
\]
By the double inequality $1-\mathbf{c}\delta^{2(1+\epsilon)}\le Y_{n+1}\le 1$ in $\overline{B}(0,\delta) \times [0,2]$, we can reduce $\delta$ once more in order to satisfy
\begin{equation}\label{3}
3/4\le Y_{n+1}\le 1\quad \mathrm{in}\;  \overline{B}(0,2\delta) \times [0,2].
\end{equation} 

Pick $x\in \overline{B}(0,\delta)$, set $U_x:=\overline{B}(x,\delta)\times [0,2]$, $E_0=C([0,2],U_x)$ and 
\[
E:=\{h\in C([0,2],U_x);\; h(0)=(x,0)\}
\]
$E$ is then a closed subspace of $E_0$ which we endow with the natural of $E_0$:
\[
\|h\|_{E_0}=\max_{[0,2]}|h|,\quad h\in E_0.
\]
Consider on $E$ the integral equation
\begin{equation}\label{4}
h(t)=(x,0)+\int_0^tY(h(s))ds,\quad t\in [0,2].
\end{equation}

This integral equation admits a unique solution $h\in E$. To prove this claim, we introduce the mapping $T=(T_1,\ldots, T_n, T_{n+1})=(T',T_{n+1})$ defined as follows
\[
T:h\in E\mapsto Th:Th(t):= (x,0)+\int_0^tY(h(s))ds,\quad t\in [0,2].
\]
Let us first show that $T(E)\subset E$. Clearly, $Th(0)=(x,0)$. On the other hand, we have
\[
T'h(t)=x-\int_0^tX(h(s))ds.
\]
In light of \eqref{2}, we have
\[
|T'h(t)-x|\le \delta ,\quad t\in [0,2],
\]
and \eqref{3} implies that $T_{n+1}h(t)\in [0,2]$. We verify that a power of $T:E\rightarrow E$ is contractive. Precisely, we have
\[
\|T^kh_1-T^kh_2\|_{E_0}\le \frac{M^k}{k!},\quad k\ge1,\; h_1,h_2\in E,
\]
where $M=\max_{U_x}|\nabla Y|$. Therefore, $T$ admits a unique fixed point by the Banach contraction principle. This proves our assertion since $h\in E$ is a solution of the integral equation \eqref{4} if and only if $h$ is a fixed point of $T$.

The solution $h\in E$ of the integral equation \eqref{4} belongs to $C^1([0,2],U_x)$ and it is nothing but the solution of the Cauchy problem
\begin{equation}\label{5}
h'(t)=Y(h(t)), \; t\in [0,2],\quad h(0)=(x,0).
\end{equation}
For convenience,  the unique solution of \eqref{5} will denoted by $h(t,x)$. The well known results on dependence of solutions of Cauchy problems on the initial data show that $h$ is of class $C^1$.

It follows from \eqref{3} that $h_{n+1}([0,2])\supset [0,3/2]$. Hence, there exists $0<t(x)<2$ such that $h_{n+1}(t(x),x)=1$. Since $h_{n+1}$ is increasing, $t(x)$ is unique. According to the implicit function theorem, $x\mapsto t(x)$ is of class $C^1$. Note that since $(0,t)$ is the unique solution of \eqref{5} when $x=0$ we get $t(0)=1$.

We denote by $\phi(x)=(\phi_1(x),\ldots ,\phi_n(x))$ the unique point so that $h(t(x),x)=(\phi(x),1)$. In particular, we have $\phi(0)=0$. 

Our next objective is to show that $\phi$ is $C^1$ diffeomorphism from a  neighborhood of $0$ in $\mathbb{R}^2$ onto another neighborhood of $0$ in $\mathbb{R}^2$. By differentiating $h_{n+1}(t(x),x)=1$, we obtain
\[
\partial_th_{n+1}(t(x),x)\partial_jt(x)+\partial_jh_{n+1}(t(x),x))=0,\quad 1\le j\le n,
\]
and then
\begin{equation}\label{6}
\partial_jt(x)=-\frac{\partial_jh_{n+1}(t(x),x))}{\partial_th_{n+1}(t(x),x)},\quad 1\le j\le n.
\end{equation}
On the other hand, as $\phi_j(x)=h_j(t(x),x)$, $\phi$ is of class $C^1$ and
\[
\partial_k\phi_j(x)=\partial_th_j(t(x),x)\partial_kt(x)+\partial_kh_j(t(x),x),\quad 1\le j,k\le n.
\]
In light of \eqref{6}, we have $\partial_kt(0)=0$ for all $1\le k\le n$. Hence,
\[
\partial_k\phi_j(0)=\partial_kh_j(1,0),\quad 1\le j,k\le n.
\]

Let $1\le j,k\le n$  and set $g_{jk}(s)=\partial_kh_j(s,0)$, $0\le s\le 2$. Then
\begin{equation}\label{7}
g_{jk}'(s)=\partial_k\partial_th_j(s,0),\quad 0\le s\le 2.
\end{equation}
Using $\partial_th_j(t,x)=Y_j(h(t,x))$, we obtain
\begin{align*}
\partial_k\partial_th_j(t,x)&=\sum_{m=1}^{n+1}\partial_mY_j(h(t,x))\partial_kh_m(t,x)
\\
&=-\sum_{m=1}^{n+1}\partial_mX_j(h(t,x))\partial_kh_m(t,x).
\end{align*}
This last identity implies
\[
\partial_k\partial_th_j(t,0)=-\sum_{m=1}^{n+1}\partial_mX_j(h(t,0))\partial_kh_m(t,0)=0
\]
and hence $g_{jk}'=0$. In consequence, we have $g_{jk}(1)=g_{jk}(0)$. From the identity $h_j(0,x)=x$, we get $g_{kj}(1)=\partial_kh_j(1,0)=\partial_k\phi_j(0)=\delta_{kj}$. In other words, $(\partial_k\phi_j(0))=I$, the identity matrix of $\mathbb{R}^n$. By applying local inversion theorem, we conclude that $\phi$ is a $C^1$-diffeomorphism from $\omega_0\in \mathcal{N}$ onto $\omega_1\in \mathcal{N}$.

We complete the proof by showing that $f(x)=p(\phi(x))$ in $\omega_0$. Let $(x,\tau)\in \omega_0\times [0,2]$. From
\[
X(x,\tau)\cdot \nabla F(x,\tau)=p(x)-f(x)=\xi\cdot \nabla F(x,\tau),
\] 
we obtain
\[
(\xi-X(x,\tau))\cdot \nabla F(x,\tau)=Y(x,\tau)\cdot \nabla F(x,\tau)=0.
\]
We use this identity to obtain
\[
\frac{d}{dt}F(h(t,x))=\nabla F(h(t,x))\cdot \frac{d}{dt}h(t,x)=\nabla F(h(t,x))\cdot Y(h(t,x))=0
\]
and then $F(h(t,x))=F(h(0,x))=F(x,0)=f(x)$. On the other hand,  we have $F(h(t(x),x))=F(\phi(x),1)=p(\phi(x))$. That is we have $f(x)=p(\phi(x))$ as expected.
\end{proof}

Theorem \ref{thm1} is due to Cheng \cite{Ch}, based on an idea borrowed from \cite{Ku}. The detailed proof we provide here clarifies some points of the one given in \cite{Ch}.

\subsection{Zeros of two-variable harmonic homogeneous function}

If necessary, we will identify $\mathbb{R}^2$ and $\mathbb{C}$. For $\theta\in [0,\pi[$, we denote $D(\theta)$ the line passing through the origin and directed by the vector $e^{i\theta}$.

\begin{proposition}\label{prop1}
Let $f:\mathbb{R}^2\rightarrow \mathbb{R}$ be a $C^\infty$ harmonic homogeneous function of degree $k\ge 1$ which is non-identically equal to zero. Then there exists $\theta\in [0,\pi[$ such that
\[
Z(f):=\{x\in \mathbb{R}^2;\; f(x)=0\}=\bigcup_{j=0}^{k-1} D(\theta+j\pi/k).
\]
\end{proposition}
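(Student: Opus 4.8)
The plan is to reduce everything to a one-variable trigonometric identity by passing to polar coordinates and exploiting homogeneity. Writing $x=(r\cos\theta,r\sin\theta)$, I would set $g(\theta):=f(\cos\theta,\sin\theta)$, the restriction of $f$ to the unit circle; this $g$ is $C^\infty$ and $2\pi$-periodic. Homogeneity of degree $k$ gives $f(x)=r^k g(\theta)$ for $r>0$, so the entire behaviour of $f$ is encoded in $g$, and $f(0)=0$ since $k\ge 1$.

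First I would translate harmonicity into an ODE for $g$. Applying the polar form of the Laplacian $\Delta=\partial_r^2+r^{-1}\partial_r+r^{-2}\partial_\theta^2$ to $f=r^kg(\theta)$ yields $r^{k-2}\big(k(k-1)g+kg+g''\big)=0$, hence $g''+k^2g=0$. Its general solution is $g(\theta)=a\cos(k\theta)+b\sin(k\theta)$, which is automatically $2\pi$-periodic because $k$ is an integer, so no periodicity constraint is lost. Since $f\not\equiv 0$, we have $(a,b)\neq(0,0)$, and I would rewrite $g(\theta)=A\cos(k\theta-\psi)$ with $A=\sqrt{a^2+b^2}>0$ and a suitable phase $\psi$.

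Next I would read off the zero set. For $r>0$ we have $f(x)=0$ if and only if $\cos(k\theta-\psi)=0$, i.e. $\theta\equiv(\psi+\pi/2)/k \pmod{\pi/k}$. Letting $\theta_0$ be the representative of $(\psi+\pi/2)/k$ lying in $[0,\pi/k)$, the solution angles are exactly $\theta_0+j\pi/k$, $j\in\mathbb{Z}$. Finally I would invoke the antipodal identification $D(\theta)=D(\theta+\pi)$: because the angles recur with period $\pi/k$ and a full line corresponds to a pair of antipodal angles differing by $\pi=k\cdot(\pi/k)$, the distinct lines are precisely $D(\theta_0+j\pi/k)$ for $j=0,\ldots,k-1$, each with direction angle in $[0,\pi)$ since $\theta_0+(k-1)\pi/k<\pi$. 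This gives the claimed decomposition with $\theta=\theta_0$.

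The computation is essentially routine once the ODE is in hand; the only point requiring genuine care is the bookkeeping in the last step — correctly counting the $k$ distinct lines after identifying antipodal directions, and checking that the chosen representatives land in $[0,\pi)$, so that the rays are neither over- nor under-counted.
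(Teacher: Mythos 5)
Your proof is correct and takes essentially the same route as the paper: both reduce the statement to the ODE $g''+k^2g=0$ for the restriction $g$ of $f$ to the unit circle, conclude that $g$ is a pure sinusoid of frequency $k$, and pair the $2k$ equally spaced zero angles into $k$ lines. The only difference is cosmetic — you obtain the ODE directly from the polar form of the Laplacian applied to $f=r^kg(\theta)$, whereas the paper derives it in Cartesian coordinates from Euler's homogeneity identity $x\partial_xf+y\partial_yf=kf$ combined with harmonicity.
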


\begin{proof}
As $f=f(x,y)$ is homogeneous of degree $k$, we have
\begin{equation}\label{8}
x\partial_xf+y\partial_yf=kf.
\end{equation}
Taking the derivative with respect to $x$ of both sides of \eqref{8}, we obtain
\[
x\partial_x^2f+\partial_xf+y\partial_x\partial_yf=k\partial_xf
\]
and hence
\begin{equation}\label{9}
x^2\partial_x^2f+x\partial_xf+xy\partial_x\partial_yf=kx\partial_xf.
\end{equation}
By interchanging the roles of $x$ and $y$, we get 
\begin{equation}\label{10}
y^2\partial_y^2f+y\partial_yf+xy\partial_x\partial_yf=ky\partial_yf.
\end{equation}
In light of \eqref{8}, by taking the sum side by side of \eqref{9} and \eqref{10} and using that $f$ is harmonic, we obtain 
\begin{equation}\label{11}
y^2\partial_x^2f+x^2\partial_y^2f-kf-2xy\partial_x\partial_yf=-k^2f
\end{equation}
We choose in this identity $(x,y)=(\cos t,\sin t):=\eta(t)$, $t\in [0,2\pi]$, in order to obtain
\begin{align}
 \partial_x^2f(\eta(t))\sin ^2t+&\partial_y^2f(\eta(t))\cos ^2t-kf(\eta(t)) \label{11}
\\
&-2\partial_{xy}^2f(\eta(t))\cos t \sin t=-k^2f(\eta(t)).\nonumber
\end{align}
If  $g(t)=f(\cos t,\sin t)$, then
\[
g'(t)=-\partial_xf(\eta(t))\sin t+\partial_yf(\eta(t))\cos t.
\]
Hence
\begin{align*}
g''(t)&=\partial_x^2f(\eta(t))\sin^2t- \partial_x\partial_y f(\eta((t))\sin t\cos t-\partial_xf(\eta(t))\cos t
\\
&\qquad+\partial_y^2f(\eta(t))\cos^2t- \partial_x\partial_y f(\eta((t))\sin t\cos t-\partial_yf(\eta(t))\sin t
\\
&=\partial_x^2f(\eta(t))\sin^2t+\partial_y^2f(\eta(t))\cos^2t-2\partial_x\partial_y f(\eta((t))\sin t\cos t-kf(\eta(t)),
\end{align*}
which combined with \eqref{11} yields $g''(t)=-k^2g(t)$. That is $g$ is of the form $g(t)=\mathbf{a}\sin (k(t-t_0))$, where $\mathbf{a}$ is a constant and $t_0\in [0,\pi[$. As $f(r\cos t ,r\sin t)=r^kg(t)$ and $f$ is non identically equal to zero, $\mathbf{a}\ne 0$. We complete the proof by noting that $g(t)=0$ if and only if $t=t_0+j\pi/k$ for $j=0,1,\ldots 2k-1$.
\end{proof}

\begin{remark}
{\rm
We have provided a direct elementary proof of the Proposition \ref{prop1}. However, we can use the fact that any harmonic function is by analyticity the infinite sum, in a neighborhood of the origin, of homogeneous harmonic polynomials (e.g. \cite[Exercise 6.16]{Cho}). Therefore, a homogeneous harmonic function is necessarily a homogeneous harmonic polynomial.
}
\end{remark}

\section{Proof of Theorems \ref{mthm1} and \ref{mthm3}}

\begin{proof}[Proof of Theorem \ref{mthm1}]
We first consider the case $k\ge 3$. We proceed by contradiction. Suppose that there exist $f_1,\ldots, f_{2k}$ in $E_k$ which are linearly independent. Define 
\[
f=\sum_{j=1}^{2k} \alpha_jf_j.
\]
 
Pick $x_0\in \Omega$. Due to the translation invariance of the Laplace operator, we assume, without loss of generality, that $x_0=0$. We then choose $(\alpha_1, \ldots ,\alpha_{2k})\in \mathbb{R}^{2k}$  is such a way 
\begin{equation}\label{12}
\partial_y^mf(0)=0,\quad 0\le m\le k-1, \quad \partial_x\partial_y^mf(0)=0,\quad 0\le m\le k-2.
\end{equation}
Since \eqref{12} represents a system of $2k-1$ equations, there exists $(\alpha_1,\ldots ,\alpha_{2k})\in \mathbb{R}^{2k}$, $(\alpha_1,\ldots ,\alpha_{2k})\ne 0$, satisfying \eqref{12}.

By $f\in E_k$, we obtain $\partial_x^2f+\partial_y^2f=Vf-\lambda_k f$ and hence
\begin{equation}\label{13}
\partial_x^2\partial_y^mf+\partial_y^{2+m}f=\sum_{\alpha=0}^m\partial_y^{m-\alpha}V\partial_y^\alpha f-\lambda_k \partial_y^mf,\quad 0\le m\le k-1.
\end{equation}

In light of \eqref{12}, \eqref{13} with $x=0$  implies
\begin{equation}\label{13.3}
\partial_x^2\partial_y^mf(0)=-\partial_y^{2+m}f(0),\quad 0\le m\le k-1.
\end{equation}

Combining \eqref{12} and \eqref{13.3}, we obtain 
\begin{equation}\label{13.5}
\partial_x^2\partial_y^mf(0)=0,\quad 0\le m\le k-3.
\end{equation}

Putting together \eqref{12} and \eqref{13.5}, we get
\begin{equation}\label{13.6}
\partial_x^j\partial_y^mf(0)=0,\quad  0\le j\le 2,\; 0\le m\le( k-1)-j.
\end{equation}

Let $B$ a ball centered at $0$ and contained in $\Omega$. Let $W(y,x)=V(x,y)$ and $g(y,x)=f(y,x)$ $(x,y)\in B$. We have $\partial_x^j\partial_y^m g(y,x)=\partial_x^m\partial_y^j f(y,x)$ for all $(x,y)\in B$ and $j,m\in \mathbb{N}$, and $-\Delta g=\lambda_kg-Wg$ in $B$. We proceed as above to get 
\begin{equation}\label{13.7}
\partial_x^j\partial_y^mf(0)=0,\quad  0\le m\le 2,\; 0\le m\le( k-1)-j.
\end{equation}
A combination of \eqref{13.6} and \eqref{13.7} gives
\begin{equation}\label{13.8}
\partial_x^j\partial_y^mf(0)=0,\quad  0\le j+m\le k-1.
\end{equation}
 
As $f$ is non identically equal to zero, it  cannot admit a zero of infinite order at $0$ (e.g. \cite[Theorem 1.1]{GL}). Let $\ell \ge k$ be the smallest integer  for which there exists two integers $j\ge 0$ and $k\ge 0$ so that $j+m=\ell$ and $\partial_x^j \partial_y^mf(0)\ne 0$.  Taylor's formula  then yields for some $\ell \ge k$
\[
f(x,y)=p(x,y)+r(x,y),\quad (x,y)\in \omega,
\]
where $\omega$ is a neighborhood of $0$ in $\mathbb{R}^2$,
\[
p(x,y)=\sum_{j+m=\ell}\frac{1}{j!m!}\partial_x^j \partial_y^mf(0)x^jy^m
\]
is non identically equal to $0$, $r(x)=O(|(x,y)|^{\ell+1})$ and $\nabla r(x,y)=O(|(x,y)|^\ell)$. We verify that $|\nabla p|\ge \mathbf{c}|(x,y)|^{\ell-1}$, where $\mathbf{c}>0$ is a constant.

On the other hand, we have
\begin{align*}
\partial_x^2p(x,y)&=\sum_{j\ge 2,\; j+m=\ell}\frac{1}{(j-2)!}\frac{1}{m!}\partial_x^j \partial_y^mf(0)x^{j-2}y^m
\\
&=\sum_{j+m=\ell-2}\frac{1}{j!}\frac{1}{m!}\partial_x^{j+2} \partial_y^mf(0)x^jy^m
\end{align*}
and
\[
\partial_y^2p(x,y)=\sum_{j+m=\ell-2}\frac{1}{j!}\frac{1}{m!}\partial_x^j \partial_y^{m+2}f(0)x^jy^m.
\]
Hence
\[
\Delta p(x,y)=\sum_{j+m=\ell-2}\frac{1}{j!}\frac{1}{m!}\left[\partial_x^{j+2} \partial_y^mf(0)+\partial_x^j \partial_y^{m+2}f(0)\right]x^jy^m.
\]
As before, using that
\[
\partial_x^{j+2} \partial_y^mf+\partial_x^j \partial_y^{m+2}f=-\sum_{\alpha=0}^m\partial_y^{m-\alpha}V\partial_y^\alpha f-\lambda_k\partial_x^j \partial_y^mf,
\]
we obtain
\[
\partial_x^{j+2} \partial_y^mf(0)+\partial_x^j \partial_y^{m+2}f(0)=0,\quad j+m=\ell-2.
\]
and then $\Delta p=0$. Therefore, $p$ a is harmonic polynomial of degree $\ell$.

By noting that all the assumptions of Theorem \ref{thm1} are satisfied with $\epsilon=1$, we find a $C^1$-diffeomorphism $\phi$ from a neighborhood $\omega$ of $0$ in $\mathbb{R}^2$ onto a neighborhood $\omega'$ of $0$ in $\mathbb{R}^2$ such that 
\begin{equation}\label{14}
f(x,y)=p(\phi(x,y)),\quad (x,y)\in \omega.
\end{equation}
Now, as $p$ satisfies also the assumptions of Proposition \ref{prop1}, we have for some $\theta\in [0,\pi[$
\[
Z(p):=\{x\in \mathbb{R}^2;\; p(x)=0\}=\bigcup_{j=0}^{\ell-1} D(\theta+j\pi/k).
\]

Define for $1\le j\le \ell-1$
\[
\mathcal{C}_j(x,y)=\phi^{-1}(u,v),\quad (u,v)\in D(\theta+j\pi/k)\cap \omega'.
\]
Let $Z(f)=\{x\in \Omega ;\; f(x)=0\}$. It follows from \eqref{14} that
\[
Z(f)\cap \omega=\bigcup_{j=0}^{\ell-1} \mathcal{C}_j.
\]
Since $\mathcal{C}_j$ intersect only at $0$, we deduce that $f$ admits at least $\ell+1$ nodal domains. According to \cite[Corollary 2.5]{AHH} (Courant's nodal domain theorem for the Schr\"odinger-Dirichlet operator), $f$ must be identically equal to zero. This leads to the expected contradiction. Here, we recall that a nodal domain is a connected component of $\Omega\setminus Z(f)$.

For the case $k=2$ we proceed again by contradiction. To this end, we assume that $m_2\ge 4$. Therefore, there exists $g_1,\ldots ,g_4$ linearly independent. Pick $x_0\in \Omega$ and  let $f_j=g_j-\alpha_jg_4$, $j=1,2,3$. We choose $\alpha_j$ so that $f_j(x_0)=0$, $j=2,3,4$. Clearly, $f_1,f_2,f_3$ are linearly independent. Let $(\beta_1,\beta_2,\beta_3)\ne 0$ chosen in such a way that
\[
\beta_1\nabla f_1(x_0)+\beta_2\nabla f_2(x_0)+\beta_3\nabla f_3(x_0)=0
\]
and set $f=\beta_1f_1+\beta_2f_2+\beta_3f_3$. Therefore, $f\in E_2$ and satisfies $f(x_0)=0$ and $\nabla f(x_0)=0$. Upon making a translation, we assume that $x_0=0$. Let
\[
p(x,y)=ax^2+bxy+cy^2,
\]
where $a=\partial_x^2f(0)/2$, $b=\partial_x\partial_yf(0)$ and $c=\partial_y^2f(0)/2$. Using that
\[
-\partial_y^2f=\partial_x^2f+\lambda_2f-Vf,
\]
we obtain $c=-a$. Hence
\[
p(x,y)=a(x^2-y^2)+bxy.
\]
If $a^2+b^2\ne 0$, then $d^2f(0)$, the second differential of $f$ at $0$, is non-degenerate. By applying the Morse lemma, we deduce that there exists a $C^1$-diffeomporphism $\phi$ from a neighborhood $\omega$ of $0$ in $\mathbb{R}^2$ onto a neighborhood  $\omega'$ of $0$ in $\mathbb{R}^2$ such that $f(x,y)=p(\phi(x,y))$, $(x,y)\in \omega$. We proceed similarly as for the case $k\ge 3$ to derive the expected contradiction. If $a=b=0$, then, again from the unique continuation property, $f$ cannot have a zero of infinite order at $0$. Therefore, we find $\ell \ge 3$, a polynomial $p$ and $C^\infty$ function $r$ satisfying the same assumptions as in the case $k\ge 3$ for which we have $f=p+r$. Once again, we proceed as in the case $k\ge 3$ to obtain the expected contradiction.
\end{proof}


From the preceding proof we deduce the following result. The differential of order $j\ge 1$ of a function $f$ will denoted by $d^jf$.

\begin{theorem}\label{mthm2}
Under the assumptions of Theorem \ref{mthm1}, if $f\in E_k$, $k\ge 2$, satisfies at some $x_0\in \Omega$, $d^jf(x_0)=0$ for $0\le j\le k-1$, then $f=0$.
\end{theorem}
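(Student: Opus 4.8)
The plan is to recognize that the present statement is exactly the engine of the proof of Theorem~\ref{mthm1}: once one has an eigenfunction all of whose derivatives up to order $k-1$ vanish at a point, the contradiction there was derived using nothing else. So I would replay the tail of that argument, this time taking the vanishing as a hypothesis rather than arranging it through a choice of coefficients.

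First I would use the translation invariance of $-\Delta$ to assume $x_0=0$, and rewrite the hypothesis $d^jf(0)=0$, $0\le j\le k-1$, in the equivalent form $\partial_x^j\partial_y^mf(0)=0$ for all $j+m\le k-1$, which is precisely \eqref{13.8}. Arguing by contradiction, I assume $f\not\equiv 0$. By the strong unique continuation property for $-\Delta f=(\lambda_k-V)f$ (\cite[Theorem 1.1]{GL}), $f$ cannot vanish to infinite order at $0$, so there is a least integer $\ell$, necessarily $\ell\ge k$ by \eqref{13.8}, admitting a nonvanishing derivative $\partial_x^j\partial_y^mf(0)\ne 0$ with $j+m=\ell$. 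Taylor's formula then gives $f=p+r$ near $0$, with $p$ the degree-$\ell$ homogeneous part, $r=O(|(x,y)|^{\ell+1})$ and $\nabla r=O(|(x,y)|^\ell)$.

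Next I would show $p$ is harmonic exactly as in the proof of Theorem~\ref{mthm1}: differentiating the equation to get $\partial_x^{j+2}\partial_y^mf+\partial_x^j\partial_y^{m+2}f=-\sum_{\alpha=0}^m\partial_y^{m-\alpha}V\,\partial_y^\alpha f-\lambda_k\partial_x^j\partial_y^mf$ and evaluating at $0$ for $j+m=\ell-2$; every term on the right is a derivative of order at most $\ell-2<\ell$, hence zero, so $\Delta p=0$. Thus $p$ is a nonzero harmonic homogeneous polynomial of degree $\ell$, and by Proposition~\ref{prop1} we may write $p(r\cos t,r\sin t)=\mathbf{a}r^\ell\sin(\ell(t-t_0))$ with $\mathbf{a}\ne 0$; a one-line computation in polar coordinates gives $|\nabla p|=\ell|\mathbf{a}||(x,y)|^{\ell-1}$, so the hypotheses of Theorem~\ref{thm1} hold with $\epsilon=1$. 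Applying Theorem~\ref{thm1} produces a $C^1$-diffeomorphism $\phi$ fixing $0$ with $f=p\circ\phi$ near $0$. Since $Z(p)$ is a union of $\ell$ lines through the origin (Proposition~\ref{prop1}), $Z(f)$ near $0$ is a union of $\ell$ arcs meeting only at $0$, so $f$ has at least $\ell+1\ge k+1$ nodal domains, contradicting Courant's nodal domain theorem (\cite[Corollary 2.5]{AHH}). Hence $f=0$.

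I expect no genuinely new obstacle: the work is in observing that \eqref{13.8} is the present hypothesis and that the subsequent reasoning is self-contained. The only point worth a remark is that the separate treatment of $k=2$ in Theorem~\ref{mthm1} is unnecessary here. Since Theorem~\ref{thm1} already allows $\ell>1$ and the bound $|\nabla p|\ge\mathfrak{c}|x|^{\ell-1}$ is automatic for a nonzero harmonic homogeneous $p$, the single argument above applies uniformly to all $\ell\ge k\ge 2$, including the case $\ell=2$ that previously invoked the classical Morse lemma.
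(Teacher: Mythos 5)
Your proposal is correct and is exactly the paper's intended argument: the paper proves Theorem \ref{mthm2} simply by noting that the hypothesis $d^jf(x_0)=0$, $0\le j\le k-1$, is the condition \eqref{13.8} arranged in the proof of Theorem \ref{mthm1}, after which the unique continuation, Taylor expansion, harmonicity of the leading part $p$, Theorem \ref{thm1}, Proposition \ref{prop1}, and the Courant nodal-domain count go through verbatim. Your added observations---that $|\nabla p|=\ell|\mathbf{a}||(x,y)|^{\ell-1}$ follows from the polar form of $p$ and that Cheng's lemma (valid for all $\ell>1$) makes the separate $\ell=2$ Morse-lemma case superfluous here---are accurate refinements, not departures.
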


The following lemma will be used to prove Theorem \ref{mthm3}.

\begin{lemma}\label{lem1}
Assume that the assumptions of Theorem \ref{mthm3} are satisfied. Let $k\ge 2$ and set $m=m_k=\mathrm{dim}(E_k)$. Then $E_k$ admits a basis $(f_1,\ldots,f_m)$ so that $f_j$ and $f_j\circ \varrho$ are linearly dependent.
\end{lemma}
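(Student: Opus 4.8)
The plan is to exploit the fact that the reflection $\varrho$ induces a linear involution on the eigenspace $E_k$, and then to diagonalize this involution.

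First I would introduce the pullback operator $R\colon f\mapsto f\circ \varrho$ on $E_k$ and verify that it maps $E_k$ into itself. Since $\varrho$ is an orthogonal transformation of $\mathbb{R}^2$ (the reflection across the diagonal $y=x$), it commutes with the Laplacian, so that $\Delta(f\circ \varrho)=(\Delta f)\circ \varrho$. Combined with the hypothesis $V=V\circ \varrho$, this yields $A(f\circ \varrho)=(Af)\circ \varrho$. Moreover, the assumption $\varrho(\Omega)=\Omega$ guarantees that $f\circ \varrho\in H_0^1(\Omega)$ whenever $f\in H_0^1(\Omega)$, so the Dirichlet boundary condition is preserved. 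Hence if $Af=\lambda_k f$ then $A(f\circ \varrho)=\lambda_k(f\circ \varrho)$, which shows $R(E_k)\subset E_k$.

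Next, because $\varrho\circ \varrho=\mathrm{id}$, the operator $R$ is an involution on the finite-dimensional real vector space $E_k$, i.e. $R^2=I$. Any such involution is diagonalizable with eigenvalues in $\{-1,+1\}$: concretely, every $f\in E_k$ decomposes as $f=\tfrac12(f+Rf)+\tfrac12(f-Rf)$, where the first summand lies in $E_k^+:=\ker(R-I)$ and the second in $E_k^-:=\ker(R+I)$. This gives the direct-sum decomposition $E_k=E_k^+\oplus E_k^-$.

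Finally, I would produce the desired basis by concatenating any basis of $E_k^+$ with any basis of $E_k^-$. Each such basis vector $f_j$ is an eigenvector of $R$, i.e. satisfies $f_j\circ \varrho=\pm f_j$, so $f_j$ and $f_j\circ \varrho$ are linearly dependent, as claimed. The only step requiring genuine care is the invariance $R(E_k)\subset E_k$; once the commutation of $\varrho$ with $A$ and the compatibility with the boundary condition are checked, the remaining diagonalization of the involution is routine linear algebra and presents no real obstacle.
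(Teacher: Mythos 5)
Your proof is correct, and it takes a genuinely different (and cleaner) route than the paper's. The paper merely asserts that $f\circ\varrho\in E_k$ --- a point you verify carefully via the orthogonality of $\varrho$, the hypothesis $V=V\circ\varrho$, and $\varrho(\Omega)=\Omega$ --- and then constructs the basis by induction: having built an $L^2(\Omega)$-orthonormal family $(f_1,\ldots,f_\ell)$ with $f_j\circ\varrho=\mu_j f_j$, it picks $g_{\ell+1}\in E_k$ of unit norm orthogonal to the $f_j$, sets $f_{\ell+1}=g_{\ell+1}$ or $f_{\ell+1}=\tfrac12\left(g_{\ell+1}+g_{\ell+1}\circ\varrho\right)$, and uses the change of variables $x\mapsto\varrho(x)$ to check that orthogonality to $f_1,\ldots,f_\ell$ is preserved. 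You replace this induction by the standard spectral decomposition of the involution $R:f\mapsto f\circ\varrho$: since $R^2=I$ on the finite-dimensional space $E_k$, the splitting $f=\tfrac12(f+Rf)+\tfrac12(f-Rf)$ gives $E_k=E_k^+\oplus E_k^-$, and concatenating bases of the two eigenspaces finishes the proof. This buys you slightly more than the lemma states: every basis vector satisfies $f_j\circ\varrho=\pm f_j$ exactly, with the dichotomy explicit (the paper's coefficients $\mu_j$ are in fact forced to equal $\pm1$ by $\varrho^2=\mathrm{id}$, though the paper only records $\mu_j\in\mathbb{R}$), and it avoids the paper's inessential orthonormality bookkeeping, including its slightly inaccurate claim that $\int_\Omega f_{\ell+1}^2\,dx=1$ after symmetrization, which need not hold --- all that matters there is $f_{\ell+1}\ne 0$. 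Both arguments rest on the same two facts, $R(E_k)\subset E_k$ and $R^2=I$; yours packages them into the canonical linear-algebra statement rather than an ad hoc induction.
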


\begin{proof}
We first note that if $f\in E_k$, then $f\circ \varrho\in E_k$. Pick $g_1\in E_k\setminus\{0\}$. In the case where $g_1$ and $g_1\circ \varrho$ are linearly dependent, we choose $f_1=g_1$. Otherwise, we choose $f_1=(g_1+g_1\circ \varrho)/2$. As $\varrho^2=I$, where $I$ is the identity of $\mathbb{R}^2$, we verify that $f_1=f_1\circ \varrho$. For the rest of the construction, we proceed by induction. Suppose that we have constructed $(f_1,\ldots ,f_\ell)$, $\ell<m$, so that $f_j$ and $f_j\circ \varrho$ are linearly dependent for $j=0,\ldots \ell$. For simplicity, we assume that $(f_1,\ldots ,f_\ell)$ is an orthonormal system in $L^2(\Omega)$, i.e.
\[
\int_\Omega f_jf_{j'}dx=\delta_{jj'}, \quad 1\le j,j'\le \ell.
\]
By $\ell<m$, we find $g_{\ell+1}\in E_k$ satisfying
\begin{equation}\label{15}
\int_\Omega g_{\ell+1}^2dx=1,\quad \int_\Omega g_{\ell+1}f_j=0,\quad 1\le j\le \ell.
\end{equation}
If $g_{\ell+1}$ and $g_{\ell+1}\circ \varrho$ are linearly dependent, we choose $f_{\ell+1}=g_{\ell+1}$. Otherwise, we choose $f_{\ell+1}=(g_{\ell+1}+g_{\ell+1}\circ \varrho)/2$. We have $\int_\Omega f_{\ell+1}^2dx=1$ and, as $f_j\circ \varrho=\mu_jf_j$, with $\mu_j\in \mathbb{R}$, for all $1\le j\le \ell$, we obtain
\begin{align*}
2\int_\Omega f_{\ell+1}f_jdx&=\int_\Omega \left[g_{\ell+1}dx+ (g_{\ell+1}\circ \varrho)f_j\right]dx
\\
&=\int_\Omega g_{\ell+1}f_jdx+ \int_\Omega g_{\ell+1} (f_j\circ \varrho) dx
\\
&=(1+\mu_j)\int_\Omega g_{\ell+1}f_jdx=0,\quad 1\le j\le \ell \quad \mbox{(by \eqref{15}}).
\end{align*}
This completes the proof.
\end{proof}

\begin{proof}[Proof of Theorem \ref{mthm3}]
We proceed by contradiction by assuming that $m\ge 2k-1$. According to Lemma \ref{lem1}, we can choose in $E_k$ a basis $(f_1,\ldots ,f_m)$ in such a way that $f_j$ and $f_j\circ \varrho$ are linearly dependent for all $1\le j\le m$. Let $f=\alpha_1f_1+\ldots \alpha_mf_m$, where $(\alpha_1,\ldots \alpha_m)\ne 0$ is chosen so that 
\[
f(0)=0,\quad \partial_yf(0)=0,\quad \partial_x^j\partial_y^{p}f(0)=0,\quad j=0,1, \; 2\le p\le k-1.
\]
But $f_j$ and $f_j\circ \varrho$ are linearly dependent. Hence 
\[
\partial_xf(0)=0,\quad \partial_x^j\partial_y^{p}f(0)=0,\quad 0\le j\le k-1, \; p=0,1.
\]
We deduce that 
\[
\partial_y^pf(0)=0,\quad 0\le p\le k-1,\quad \partial_x\partial_y^{p}f(0)=0,\quad 0\le j\le k-1.
\]
This is \eqref{12} in the proof of Theorem \eqref{mthm1}. The expected contradiction follows by mimicking the proof of Theorem \ref{mthm1} starting from \eqref{12}.
\end{proof}

\section*{Acknowledgement} 

I would like to thank Vladimir Bobkov for drawing my attention to the fact that the assumptions of \cite[Lemma 2.4]{Ch} are not sufficient to guarantee its validity, as well as the reference \cite{Pa}, where the author shows in \cite[Korollar 2.9.4]{Pa} that the above-mentioned lemma is valid under an additional assumption. We emphasize that the assumptions of \cite[Korollar 2.9.4]{Pa} differ slightly from those of Theorem \ref{thm1}.

\appendix

\section{}\label{appA}

The assumptions and notations are those of Theorem \ref{thm1} and its proof. Since $X$ is $C^\infty$-smooth in $(\omega\setminus\{0\})\times [0,2]$, it suffices to show that $X$ is continuously differentiable in $(0,\tau)\in \{0\}\times  [0,2]$. First, we prove that
\[
\lim_{x\rightarrow 0, x\ne 0}\partial_jX_i(x,\tau)=0, \quad t\in [0,2],\; 1\le i,j\le n.
\]

Let $(x,\tau)\in \omega\times [0,2]$ and $0\le |\alpha|\le 2$. As 
\[
\partial ^\alpha F(x,\tau)=(1-\tau) \partial^\alpha f(x)+\tau \partial^\alpha p(x),
\]
we obtain from (ii)
\begin{equation}\label{a1}
\partial ^\alpha F(x,\tau)=\partial^\alpha p(x)+O\left(|x|^{\ell-|\alpha|+\epsilon}\right).
\end{equation}
Here and henceforth, $O(\cdot)$ is uniform in $\tau\in [0,2]$. In light of (i), we have
\[
\partial^\beta (\partial^\alpha p)(0)=0,\quad  |\beta|\le \ell -1-|\alpha|.
\]
Applying Taylor's formula, we derive $\partial^\alpha p(x)=O\left(|x|^{\ell-|\alpha|}\right)$, which, in combination of \eqref{a1}, implies
\begin{equation}\label{a2}
\partial ^\alpha F(x,\tau)=O\left(|x|^{\ell-|\alpha|}\right).
\end{equation}

Let $1\le i,j\le n$ and assume that $x\ne 0$. We decompose $\partial_jX_i$ into three terms:
\[
\partial_jX_i= Z_1+Z_2,
\]
where
\begin{align*}
&Z_1(x,\tau)=(\partial_jp(x)-\partial_jf(x)) \frac{\partial_iF(x,\tau)}{|\nabla F(x,\tau)|^2}+(p(x)-f(x)) \frac{\partial_{ij}F(x,\tau)}{|\nabla F(x,\tau)|^2},
\\
&Z_2(x,\tau)= (p(x)-f(x)) \partial_iF(x,\tau)\partial_j\left(\frac{1}{|\nabla F(x,\tau)|^2}\right),
\end{align*}

Combining $|\nabla F(x,\tau)|\ge \mathbf{c}|x|^{\ell-1}$  and \eqref{a2}, we get
\begin{equation}\label{a3}
Z_1(x,\tau)= O\left(|x|^{\epsilon}\right).
\end{equation}

On the other hand, we have
\[
Z_2=-2(p(x)-f(x)) \partial_iF(x,\tau)\sum_k\frac{\partial_{jk}F(x,\tau)\partial_kF(x,\tau)}{|\nabla F(x,\tau)|^4}.
\]
Proceeding as for $Z_1$, we verify that
\begin{equation}\label{a4}
Z_2=O\left(|x|^\epsilon\right).
\end{equation}
Putting together \eqref{a3} and \eqref{a4}, we get
\[
\partial_jX_i(x,\tau)=O\left(|x|^\epsilon\right).
\]

To complete the proof, we show that
\begin{equation}\label{a5}
\lim_{x\rightarrow 0, x\ne 0}\partial_\tau X_i(x,\tau)=0, \quad \tau\in [0,2],\; 1\le i\le n.
\end{equation}
To this end, we verify that, for all $1\le i\le n$ and $\tau \in [0,2]$,
\begin{align*}
&\partial_\tau X_i(x,\tau)=\frac{(p(x)-f(x))(\partial_ip(x)-\partial_if(x))}{|\nabla F(x,\tau)|^2}
\\
&\hskip 2cm -2\sum_k\frac{(p(x)-f(x))(\partial_kp(x)-\partial_kf(x))\partial_kF(x,\tau)\partial_iF(x,\tau)}{|\nabla F(x,\tau)|^4}.
\end{align*}
Similarly as above, we verify that
\[
\partial_\tau X_i(x,\tau)=O\left(|x|^{2\epsilon +1}\right),\quad 1\le i\le n,
\]
from which \eqref{a5} follows.

\end{document}